\newcommand*{\field}[1]{\mathbb{#1}}
\newcommand*{\Q}{\field{Q}}
\newcommand*{\F}{\field{F}}
\newcommand*{\R}{\field{R}}
\newcommand*{\C}{\field{C}}
\newcommand*{\Z}{\field{Z}} 
\newcommand*{\OF}{\mathcal{O}_\F}
\newcommand*{\DF}{\mathcal{D}_\F}
\newcommand*{\group}[1]{\mathrm{#1}}
\newcommand*{\SU}{\group{SU}}
\newcommand*{\Ug}{\group{U}}
\newcommand*{\Og}{\group{O}}
\newcommand*{\SO}{\group{SO}}
\newcommand*{\SL}{\group{SL}}
\newcommand*{\VF}{V_{\F}}
\newcommand*{\VQ}{V_{\Q}}
\newcommand*{\VFC}{V_{\F}(\C)} 
\newcommand*{\VQR}{V_{\Q}(\R)} 
\newcommand*{\VQC}{V_{\Q}(\C)} 
\newcommand*{\PFR}{\mathbb{P}^1{(V_{\F})(\C)}} 
\newcommand*{\PQC}{\mathbb{P}^1{(V_{\Q})(\C)}}  
\newcommand*{\piU}{\pi_{1}} 
\newcommand*{\piO}{\pi_{2}} 
\newcommand*{\GrO}{\mathrm{Gr}_\Og} 
\newcommand*{\HO}{\mathcal{H}_\Og}
\newcommand*{\Hp}{\mathbb{H}}
\newcommand*{\coneU}{\mathcal{K}_\Ug}
\newcommand*{\coneO}{\mathcal{K}_\Og} 
\newcommand*{\posQuad}{\mathcal{C}_{+}}
\DeclarePairedDelimiter{\hlfa}{\langle}{\rangle}
\newcommand*{\hlf}[2]{\hlfa*{ #1, #2}} 
\newcommand{\hlfempty}{\hlf{\cdot}{\cdot}}
\DeclarePairedDelimiter{\blfp}{(}{)}
\newcommand*{\blf}[2]{\blfp*{ #1, #2}} 
\newcommand*{\blfempty}{\blf{\cdot}{\cdot}} 
\newcommand*{\Qf}[1]{q\!\left( #1 \right)} 
\newcommand*{\QfNop}{q} 
\DeclarePairedDelimiter{\abs}{\lvert}{\rvert}
\DeclareMathOperator{\tr}{Tr}
\DeclareMathOperator{\norm}{N}
\newcommand*{\Mweak}{\mathcal{M}^!} 
\DeclareMathOperator{\Div}{div}
\newcommand*{\HeegO}{H}
\newcommand*{\HeegU}{\mathbf{H}}
\newtheorem{theorem}{Theorem}[section]
\newtheorem{remark}{Remark}[theorem]
\newtheorem{corollary}{Corollary}[theorem]
\numberwithin{equation}{section}
\title{Borcherds Products for $\Ug(1,1)$}
 \author{Eric Hofmann
\footnote{%
 Eric Hofmann. Mathematisches Institut Universität Heidelberg,\newline
Im Neuenheimer Feld 288, 69120 Heidelberg, \newline 
{Email: 
hofmann@mathi.uni-heidelberg.de} 
} }
\date{April 25, 2013}
\begin{document}
\maketitle

\section*{Abstract}
In \cite{Ho12} the author constructed a multiplicative Borcherds lift for indefinite unitary groups $\Ug(1,n)$. In the present paper the case of $\Ug(1,1)$ is examined in greater detail. 
The lifting in this case takes weakly holomorphic elliptic modular forms of weight zero as inputs and lifts them to meromorphic modular forms for $\Ug(1,1)$, on the usual complex upper half-plane $\Hp$. In this setting, the Weyl-chambers can be described very explicitly and the associated Weyl-vectors can also be calculated. This is carried out in detail for weakly holomorphic modular forms with $q$-expansions of the form $q^{-n} + \mathcal{O}(q)$, $n>0$, and for a constant function, which together span the input space, $\Mweak_0(\Gamma)$. 
The general case for the lifting of an arbitrary $f \in \Mweak_0(\Gamma)$ comes by as a corollary. 
The lifted functions take their zeros and poles along Heegner divisors, which consist of CM-points in $\Hp$. We find that their
 CM-order can to some extent be prescribed. \\
{2010 \textit{Mathematics Subject Classification:} 
11F27, 11F41, 11F55, 11G18, 14G35.}\\
{\textit{Key words and phrases:}  Borcherds product, unitary modular form,
Heegner divisor, CM point, unitary modular variety}

\section{Introduction and statement of results}

In his seminal paper \cite{Bo98}, Borcherds constructed a multiplicative lifting
from weakly holomorphic modular forms transforming  under a Weil-representation of
$\SL_2(\Z)$  to meromorphic automorphic forms for indefinite orthogonal groups
of signature $(2,p)$, $p\geq 2$, which have infinite product expansion and whose zeros
and poles lie along certain arithmetic cycles, called Heegner-divisors. 
In \cite{Ho11}, \cite{Ho12} this construction is transferred to unitary groups of
signature $(1,n)$, $n\geq 1$, via an embedding of $\Ug(1,n)$ into the orthogonal group
$\Og(2,2n)$. The Borcherds products in \cite{Ho12}, just as in \cite{Bo98}, come with a 
term involving a Weyl-vector, which often remains difficult to determine in practice.
It relates to a Lorentzian quadratic subspace of the underlying rational space.
Borcherds' lifting has been generalized by Bruinier in \cite{Br02}. If $f$ is a 
weakly holomorphic modular form, then, one of Bruinier's results implies that the
Weyl-chambers and Weyl-vectors related to $f$ have decompositions ruled
by the principal part in the Fourier expansion of $f$ (see \cite{Br02}, p.\ 88).

In the present paper, we focus on the case $\Ug(1,1)$. 
The results of Bruinier afford a method to calculate explicitly the Weyl-chambers 
and Weyl-vectors involved.
Note that the additive lift to $\SO(1,1)$, which determines the Weyl-vector in
this case is not entirely covered in \cite{Br02}.
In contrast, the main result of \cite{Ho12}, theorem 8.1, does apply to $\Ug(1,1)$,  
however, some useful simplifications can be made in this case. 

 The input functions here are weakly holomorphic modular forms of weight
zero for the elliptic modular group $\Gamma = \SL_2(\Z)$.
 While Bruinier uses non-holomorphic Poincar\'{e} series as inputs, 
 we consider the lift of the uniquely defined weakly holomorphic modular
 forms $j_n$ with $q$-expansion $q^{-n} + \mathcal{O}(q)$, for $n\in\Z_{>0}$.
 These forms (together with a constant)
 span the space $\Mweak_0(\Gamma)$. 
We introduce non-holomorphic Poincar\'{e} series $F_m$, for $m\in\Z$, $m<0$, in 
section \ref{sec:poincarejn} and recall the relationship between the families $F_m$ and $j_n$. 

In sections \ref{sec:LandHp} and \ref{sec:LandHO} we briefly describe the setting
for the later sections and for the application of the results from \cite{Ho12},
sections 4 and 8:
First, in section \ref{sec:LandHp} we recall the construction for the symmetric domain of
the unitary group. For $\Ug(1,1)$, it can be realized as the usual complex
upper-half plane $\Hp = \{ \tau\in\C\,;\, \Im\tau>0\}$, as follows.
Let $\F = \Q(\sqrt{d})$ be an imaginary quadratic number field, $\VF$ a two-dimensional
hermitian space over $\F$ with a non-degenerate indefinite hermitian form
$\hlfempty$, and $L$ a hermitian lattice in $\VF$, given by $L =
\OF\oplus\DF^{-1}$, where $\OF$ is the ring of integers in $\VF$ and $\DF^{-1}$
the inverse different ideal. This lattice, $L$, is even and unimodular. 

The symmetric domain for $\Ug(V)(\R)\simeq \Ug(1,1)$ 
is isomorphic to the cone of positive definite one-dimensional subspaces in 
$\mathbb{P}^{1}(\C)$, denoted $\coneU$.
We take a fixed choice of two lattice vectors of zero norm $\ell$ and $\ell'$ with
 $\hlf{\ell}{\ell'} \neq 0$ and $\ell$ primitive.
Then, $\Hp$ is identified with $\coneU$ by assigning to each $\tau \in \Hp$ a unique representative
$z = \ell' - \tau\delta\hlf{\ell'}{\ell}\ell$, with $[z] \in \coneU$.

Note also that the special unitary group $\SU(1,1)$ is isomorphic to $\SL_2(\R)$, and,
in particular, for the arithmetic subgroup $\SU(L)$, we have  $\SU(L)\simeq\SL_2(\Z)$. 

In section \ref{sec:LandHO}, we recall the construction of the tube domain model $\HO$ of the symmetric domain for the indefinite special orthogonal group $\SO(2,2)$, since Borcherds' lifting takes inputs from $\Mweak_0(\Gamma)$ to automorphic forms for this group. In the present case, $\HO$ can be identified with a product of two upper half-planes $\HO \simeq \Hp \times \Hp$.
In this section, we consider $L$ as a $\Z$-module with
the bilinear form $\blfempty \vcentcolon = \tr_{\F/\Q} \hlfempty$. We denote by $\VQ$ 
the $\Q$-vector space underlying $\VF$, as a rational quadratic space of signature $(2,2)$, 
with $\blfempty$ extended to a $\Q$-valued bilinear form.
Also, in this section, we introduce the Lorentzian $\Z$-sublattice $K \simeq \Z^2$, which will be studied further in sections
\ref{sec:PhimK} -- \ref{sec:correction}.  

This identification between  $\VQ$ and the rational space underlying $\VF$ 
results in an embedding of $\Ug(V)$ into $\SO(V)$, described in section
\ref{sec:embedHp}. It represents a special case of the
embeddings studied in \cite{Ho11} chapter 3 and \cite{Ho12} section 4. In the
present situation, through the identification $\HO \simeq \Hp \times \Hp$,
the embedding can be written simply as
$\Hp \hookrightarrow
\Hp\times\Hp$, $\tau \mapsto (\tau, -\bar\zeta)$, where $\zeta$ is a generator
of $\OF$ as a $\Z$-module, either $\sqrt{d}$ or $\frac{1 + \sqrt{d}}{2}$, 
depending on whether the discriminant of $\F$ is even or odd. 
(Here, $\sqrt{d}$ is always defined using the principal branch of
 the complex square root.)

The Weyl-vectors we want to calculate arise as follows: To each input function $f\in\Mweak_0(\Gamma)$ one associates connected components of the symmetric domain for the special orthogonal group $\SO(K\otimes_\Z\R)$ of the Lorentzian space $K\otimes_\Z\R$, called Weyl-chambers.
Note that the symmetric domain can be realized as a quadrant $\posQuad = \{ Y = (y_1, y_2)\,;\, y_1, y_2 > 0\}$ of the $Y$-plane $K\otimes_\Z\R$. For each Weyl-chamber $W$ the (additive) lifting to $\SO(K\otimes_\Z\R)$, 
denoted $\Phi^K(Y; f)$, has an expansion adapted to this Weyl-chamber, $\Phi^K(Y; f,W)$. 
The Weyl-vector $\rho(f;W) \in K\otimes_\Z\R$ is then  defined through
\[
\Phi^K(Y; f, W) = 8\sqrt{2}\pi \blf{\frac{Y}{\abs{Y}}}{ \rho(f;W)}. 
\]     
In section \ref{sec:PhimK}, starting with an integral expression from \cite{Br02}, for the lift of a non-holomorphic Poincar\'{e} series, we obtain an expansion for $\Phi_m^K(Y) \vcentcolon = \Phi^K(Y; F_m)$. In the following section \ref{sec:weylchambers_K}, we examine the Weyl-chambers for $F_m$. They can easily be listed using the divisors of $\abs{m}$, as they take the form
\[
 W(t_i, t_{i+1}) = \left\{ Y \in \posQuad\,;\, \frac{t_i^2}{\abs{m}} y_2 < y_1 < \frac{t_{i+1}^2}{\abs{m}} y_2 \right\},
\]
where $t_i$ is either $0$ or a positive divisor of $\abs{m}$ and $t_{i+1}$ is the next largest element in the set $\{0\} \cup \{ t \in \Z_{>0} \, ; t \mid \abs{m} \} \cup \{\infty\}$, with the usual ordering for integers, with $0$ smallest,  and with $\infty$ largest.  

Through the embedding  of $\Hp$ into $\HO$, these Weyl-chambers are used to define Weyl-chambers in $\Hp$, on p.\ \pageref{par:defWchHU} and in \eqref{eq:defWtiHU} below. Next, in section \ref{sec:PhimW_and_rho} 
we determine the expansion $\Phi_m^K(Y)$ of the additive lift specific to each Weyl-chamber and from this the Weyl-vector $\rho_m(W)$ of $F_m$.

Next, in section \ref{sec:correction}, we calculate the Weyl-vectors for the $j_{n}$.
Since $j_n(\tau) = F_m(1, \tau) - b_m(0,1)$, for $m = -n$, with a constant $b_m(0,1)$, we need to find the Weyl-vector of a constant. To this aim, we determine the multiplicative Borcherds lift to $\SO(2,2)$ of the constant function $f=1$. It turns out to be
equal to $\eta(z_1)\eta(z_2)$, where $(z_1, z_2)  = Z  \in \HO$. From the product expansion the Weyl-vector attached to $f=1$ comes out as $ \frac{1}{24}(1,1)$.
Also, through pull-back under the embedding $\Hp \hookrightarrow \HO$ from section \ref{sec:embedHp}, 
we immediately get the Borcherds product $\Xi(\tau; 1)$ for $\Ug(L)$ on $\Hp$, which is given by $\Xi(\tau; 1) = \eta(\tau)\eta(-\bar\zeta)$.

Now, for the Weyl-vector attached to $j_n$, $\rho(j_n; W)$, we find 
\[
\rho(j_n; W) = \rho_m(W) - \rho_{0,m}, \quad\text{with}\quad \rho_{0,m} = \sigma({\abs{m}})\cdot(1,1),  
\]
where $m= -n$ and $\sigma(\abs{m})$ denotes the divisor sum $\sum_{d \mid \abs{m}} d$.

With this result, we can once more focus on $\Ug(1,1)$. In section \ref{sec:heegnerU}, we study Heegner divisors:
They are given by locally finite sums of codimension-one sub-manifolds of the symmetric domain, defined through the complement of negative norm lattice vectors. These $\Ug(L)$-invariant divisors are preimages under the canonical projection of divisors on the modular variety $\Ug(L) \backslash \Hp$. 
Contrastingly to higher dimensions, the Heegner divisors on $\Hp$ consist only of discrete points. For a lattice vector $\lambda$ of norm $m$, $m<0$,
a primitive Heegner-point $\tau_\lambda$ of index $m$ is defined as the point in $\Hp$ with the attached representative $z(\tau_\lambda)$ in the complement of $\lambda$.
Note that $\tau_\lambda$ is $\F$-rational and a CM-point, the CM-order of which can be determined explicitly, its conductor turns out to be $\abs{m}$, or a divisor of $\abs{m}$, which can be determined explicitly.

Finally, section \ref{sec:BoProd} contains our main result: Borcherds products for $j_{n}(\tau)$, $n>0$, and, as a corollary, for arbitrary input functions $f\in\Mweak_0(\Gamma)$. This result follows from the author's more general main theorem 8.1 in \cite{Ho12}, together with the explicit calculation of Weyl-vectors and Heegner divisors. Also, in the present setting, we can write the lifting of $j_n$ as a function $\Xi(\tau; j_n)$ depending only on $\tau \in \Hp$, and neither on the particular identification of $\Hp$ with a subset $\coneU$ of the projective space, nor on the specific choice of basis vectors for the lattice $L$: 
\begin{theorem} \label{thm:mainU11_into}
Let $j_{n} = q^{-n} + \sum_{m>0} c(m)q^m \in \Mweak_0(\Gamma)$. Then,
there is a meromorphic function $\Xi(j_n)$ on the complex upper
half-plane $\Hp$, the Borcherds lift of $j_n$, with the following properties
\begin{enumerate}
 \item $\Xi(\tau; j_n)$ is a meromorphic modular form of weight $0$ for
$\Ug(L)$ with a multiplier system of (at most) finite order.
 \item The zeros and poles of $\Xi(j_n)$ lie along the following divisor
 \[
   \Div\bigl(\Xi(j_n)\bigr) 
 = \frac{1}{2} \sum_{\substack{\lambda\in L \\ \hlf{\lambda}{\lambda} = -n}} [\tau_\lambda],
  \] 
where $[\tau_\lambda]$ denotes the $\Ug(L)$-orbit of the primitive Heegner divisor $\tau_\lambda$.
\item For each Weyl-chamber $W$, $\Xi(j_n)$ has an infinite product
expansion. For \\ $W = W(t_i, t_{i+1})$ it takes the form
\[
\Xi\bigl(\tau; j_n, W(t_i, t_{i+1})\bigr) 
= C e\left( \rho_2\tau - \bar\zeta\rho_1 \right) 
\prod_{\substack{(l,k) \in K \\ n l > -k t_i^2}} \left( 1 - e\bigl( l \tau
- k \bar\zeta \bigr)\right)^{c(kl)},
\]
where the components $\rho_1$ and $\rho_2$ of the Weyl-vector are given by 
\[
\rho_1 = -\sum_{\substack{t \mid n \\  t \geq t_{i+1}} } t, \qquad
\rho_2 = -\sum_{\substack{t \mid \abs{m} \\ 0 < t \leq t_i} } \frac{n}{t}.
\]
The product converges absolutely off the set of poles in an upper half-plane \\ $\abs{\delta}\Im \tau > 2 n$.
\end{enumerate}
\end{theorem}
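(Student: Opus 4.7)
The plan is to derive the theorem as a specialization of Theorem 8.1 of \cite{Ho12}, inserting the explicit Weyl-vector computation from sections \ref{sec:PhimK}--\ref{sec:correction} and the explicit description of Heegner divisors from section \ref{sec:heegnerU}. All three properties will follow by assembly, with only part (3) requiring real bookkeeping.

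First, I would apply the main theorem of \cite{Ho12} to $j_n\in\Mweak_0(\Gamma)$. This at once produces a meromorphic Borcherds product $\Xi(\tau;j_n)$ on $\coneU\simeq\Hp$, provides the $\Ug(L)$-transformation law with a multiplier system of finite order (part (1)), and expresses $\Div(\Xi(j_n))$ as a locally finite sum of Heegner cycles attached to negative-norm lattice vectors. Specialising that general description to the one-dimensional situation of section \ref{sec:heegnerU}, where each Heegner cycle on $\Hp$ degenerates to a discrete set of primitive Heegner points $\tau_\lambda$ with $\hlf{\lambda}{\lambda}=-n$, and regrouping these into $\Ug(L)$-orbits, gives the divisor formula in part (2); the factor $\tfrac12$ absorbs the involution $\lambda\mapsto-\lambda$, which fixes each $\tau_\lambda$.

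For part (3) I would use the decomposition $j_n(\tau)=F_m(1,\tau)-b_m(0,1)$ with $m=-n$. The product expansion of \cite{Ho12} is stated on the orthogonal side and indexed by the Weyl-chambers of the Lorentzian lattice $K$; the embedding $\Hp\hookrightarrow\HO$, $\tau\mapsto(\tau,-\bar\zeta)$, of section \ref{sec:embedHp} pulls it back to an expansion on $\Hp$. The summation condition $nl>-kt_i^2$ is the image of the defining inequality of $W(t_i,t_{i+1})$ from section \ref{sec:weylchambers_K}, and the exponent $c(kl)$ is the $(kl)$-th Fourier coefficient of $j_n$. The Weyl-vector attached to $j_n$ is $\rho(j_n;W)=\rho_m(W)-\rho_{0,m}$ by section \ref{sec:correction}; substituting the explicit value of $\rho_m(W(t_i,t_{i+1}))$ from section \ref{sec:PhimW_and_rho} together with $\rho_{0,m}=\sigma(n)(1,1)$, and splitting the divisor sum of $n=\abs{m}$ at the cut-off between $t_i$ and $t_{i+1}$, produces the stated components $\rho_1,\rho_2$. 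The linear exponential $e(\rho_2\tau-\bar\zeta\rho_1)$ is then precisely the pull-back of the orthogonal Weyl term $8\sqrt{2}\pi\blf{Y/\abs{Y}}{\rho(j_n;W)}$ along the embedding.

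The main obstacle I expect is the bookkeeping that identifies the two Weyl-vector pieces on the nose: one must verify that the difference $\rho_m(W)-\rho_{0,m}$, expressed in the basis induced by $\ell,\ell'$, breaks up exactly into the two divisor sums $\rho_1=-\sum_{t\mid n,\,t\geq t_{i+1}}t$ and $\rho_2=-\sum_{t\mid n,\,0<t\leq t_i}n/t$, with the correct thresholds at $t_i$ and $t_{i+1}$, and that the bilinear pairing on $K\otimes_\Z\R$ becomes, under the pull-back of section \ref{sec:embedHp}, the form $\rho_2\tau-\bar\zeta\rho_1$. Once this is in place, the absolute convergence on $\abs{\delta}\Im\tau>2n$ follows from the standard growth bound for the Fourier coefficients of $j_n$ combined with a geometric-series estimate, just as in the convergence argument of \cite{Ho12}.
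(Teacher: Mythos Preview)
Your proposal is correct and follows essentially the same route as the paper: apply Theorem~8.1 (and Corollary~8.1) of \cite{Ho12} for existence, modularity, and the divisor, then insert the Weyl-vector \eqref{eq:rho_ofjm} and the explicit Weyl-chamber condition \eqref{eq:WChcond2} to obtain the product expansion. Two small points where the paper is more direct than your sketch: the finite-order claim for the multiplier is attributed separately to Borcherds \cite{Bo99Cor} (via the embedding trick), not to \cite{Ho12} itself; and the precise convergence threshold $\lvert\delta\rvert\Im\tau>2n$ is read off from the ready-made condition $\hlf{z}{z}>4\lvert\hlf{\ell'}{\ell}\rvert\,n$ in \cite{Ho12} (originally \cite{Br02}, Theorem~3.22) together with $\hlf{z}{z}=2\Im\tau\,\lvert\delta\rvert\,\lvert\hlf{\ell'}{\ell}\rvert$, rather than re-derived from coefficient growth and a geometric-series estimate. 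One conceptual slip: the prefactor $e(\rho_2\tau-\bar\zeta\rho_1)$ is the pull-back of the Borcherds-product term $e\bigl(\hlf{z}{\rho}/\hlf{\ell'}{\ell}\bigr)$ in \eqref{eq:boprod_z}, not of the quantity $8\sqrt{2}\pi\,\blf{Y/\lvert Y\rvert}{\rho}$, which is the additive lift $\Phi^K_m$ and is used only to \emph{define} $\rho$.
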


\section{Non-holomorphic Poincar\'{e} series and a basis for $\Mweak_0(\Gamma)$ }
\label{sec:poincarejn}
 Let $\Gamma = \SL_2(\Z)$. 
The input functions for the Borcherds lift are weakly holomorphic modular forms
of weight zero for $\Gamma$. As usual, we denote the space of such
functions by $\Mweak_0(\Gamma)$. A basis of $\Mweak_0(\Gamma)$ is given by the family of
modular forms with principal part $q^{-n}$ and constant term $0$, $j_n =  q^{-n}
+ \mathrm{O}(q)$, for $n=1, 2, \dotsc$. For example, the modular invariant $j$
is equal to $j_1 + 744$. 

In the present paper, we want to determine the Borcherds lift to $\Ug(1,1)$ for
these basis elements. Our approach here is partly based on the work of Bruinier
in \cite{Br02}, where weakly holomorphic modular forms are represented through
non-holomorphic Poincar\'{e} series of negative weight.

We define non-holomorphic Poincar\'{e} series as in \cite{BY}, section 2: Let $m$ be a
 negative integer. 
Let $I_\nu(z)$ be the usual modified Bessel function. For $s\in\C$ 
and $y\in\R\setminus\{ 0\}$, write                                          
\[
\mathcal{I}_s(y) = \sqrt{\frac{\pi\abs{y}}{2}} I_{s - 1/2}\bigl(\abs{y}\bigr).
\]
Then, for $\tau \in \Hp$, $s\in\C$, the non-holomorphic Poincar\'{e} series of
index $m$, $F_m(\tau,s)$, is given by
\[
F_m(\tau, s) = \sum_{\gamma \in \Gamma_\infty \backslash \Gamma}
\mathcal{I}_s\bigl(2\pi \abs{m} \Im(\gamma\tau) \bigr)
e\left( - \abs{m} \Re(\gamma\tau)\right),
\]
where $\Gamma_\infty = \{\left(\begin{smallmatrix}1 & n \\ 0 &
1\end{smallmatrix}\right);\; n \in \Z\}$, as usual.

Now, $F_m$ relates to $j_n$ for $n = \abs{m}$ as follows (cf.\ \cite{BY},  proposition 2.2, theorem 2.3):
\begin{gather*}
F_m(\tau,1) = j_{\abs{m}} + b_m(0,1), \quad\text{with}\\
b_m(0,1) = 24 \sigma(\abs{m}), 
\end{gather*}
where $\sigma(\abs{m})$ denotes the divisor sum $\sigma(\abs{m}) = \sum_{d \mid
\abs{m}} d$.

Thus, in determining the Weyl-vector for the lift of $j_{\abs{m}}$, with the
methods of \cite{Br02}, we must correct for the contribution of the constant $b_m(0,1)$. The required
correction term is calculated in section \ref{sec:correction} below. 

\begin{remark}\label{rmk:factor12}
 Note that the Poincar\'{e} series $F_m(0,1)$ from \cite{BY} as used here differ from the corresponding Poincar\'{e} 
series in \cite{Br02} by a factor of $\frac12$. 
\end{remark}

\section{The upper half-plane as a symmetric domain for $\Ug(1,1)$}
\label{sec:LandHp}
Let $d$ be a square-free negative integer. Consider the imaginary quadratic
number field  $\F = \Q(\sqrt{d})$.
 Denote by $D_\F$ the discriminant of $\F$, and by $\delta$ its square root,
where by the square root we always mean the principal branch of the complex
square root. 
If $d \equiv 1 \pmod{4}$, we have $D_\F = d$, otherwise, $D_\F = 4d$.
  
Let $\OF$ be the ring of integers in $\F$. We write $\OF$ in the form $\OF = \Z
+ \zeta\Z$, with $\zeta$ given by
\[
\zeta = \begin{cases}
 \frac12 {\delta}  & \text{if}\quad D_\F \equiv 0 \pmod{2} \\
 \frac12 \left( 1 + \delta \right) & \text{if}\quad D_\F \equiv 1 \pmod{2}.
        \end{cases}
\]
Denote by $\DF^{-1}$ the inverse different ideal, the $\Z$-dual of $\OF$ with
respect to the trace $\tr_{\F/\Q}$. As a fractional ideal, $\DF^{-1} =
\delta^{-1}\OF$.

Let $L$ be the $\OF$-module $\OF \oplus \DF^{-1}$, generated by $1$ and
$\delta^{-1}$. On $L$, introduce a non-degenerate indefinite hermitian form
$\hlfempty$,
linear in its left, and conjugate linear in its right argument,
\[
\hlf{\lambda}{\lambda'} = \lambda_1 \overline{\lambda_2'} +  \lambda_2
\overline{\lambda_1'}, \quad\text{with}\quad
\lambda_1, \lambda_1' \in \OF, \lambda_2, \lambda_2' \in \DF^{-1}. 
\]
Denote by $\VF$ the two dimensional hermitian space over $\F$ given by $L
\otimes_{\OF} \F \simeq \F^2$ and by $\VFC$ the two dimensional complex
hermitian space $\VFC = \VF \otimes_\F\C = L \otimes_{\OF} \C$, equipped with
the hermitian forms obtained from $\hlfempty$ by extension of scalars
to $\F$ and $\C$, respectively.

$L$ is a hermitian lattice in $\VF$, i.e.\ an $\OF$ submodule of full rank with
the hermitian form $\hlfempty$.
It is an integer lattice, since $\hlf{\lambda}{\mu} \in \DF^{-1}$ for all $\lambda$, $\mu \in L$.
Also, $L$ is even, since $\hlf{\lambda}{\lambda} \in \Z$ for all
$\lambda \in L$. Furthermore, $L$ is unimodular: The dual lattice of $L$, is defined
as
\[
L' = \left\{ v \in \VF\,;\, \hlf{\lambda}{v} \in \DF^{-1}, \quad\text{for
all}\quad \lambda \in L\right\}.
\] 
Clearly, $L = L'$. 

The unitary group $\Ug(V)$ is the group of endomorphisms of $\VF$ preserving the
hermitian form; its set of real points, $\Ug(V)(\R)$,  is the unitary group of
$\VFC$, isomorphic to $\Ug(1,1)$. The group of isometries of $L$ is an
arithmetic subgroup denoted $\Ug(L)$.  

%
Denote by $\PFR \simeq \mathbb{P}^1\C$ the projective space of $\VFC$ and by
$\piU: \VFC \rightarrow \PFR$ the canonical projection. A projective model for
the symmetric domain of $\Ug(V)(\R)$ is given by the following positive cone
\[
\coneU = \left\{  [v]\,;\, \hlf{v}{v} > 0 \right\} \subset \PFR.
\] 
An affine model can be realized as the complex upper half-plane  
$\Hp = \{  \tau\in\C\, ;\, \Im(\tau) > 0 \}$. A biholomorphic map between $\Hp$
and $\coneU$ is constructed as follows (cf.\ \cite{Ho12}, section 2):

Choose an isotropic lattice-vector $\ell \in L$, i.e.\ with $\hlf{\ell}{\ell} =
0$ and a second vector $\ell' \in L$, also isotropic, with $\hlf{\ell}{\ell'}
\neq 0$. The isotropic line $\F\ell$ corresponds to a cusp of the symmetric
domain. 
For each $[v]$ in $\coneU$ we fix a (unique) representative
$z$ of the form 
\begin{equation}\label{eq:normalz}
 z(\tau) = \ell' - \tau\delta\hlf{\ell'}{\ell} \ell \in \piU^{-1}(\coneU).
\end{equation}
Then, by the positivity condition $\hlf{z}{z}>0$, we have $\Im\tau > 0$, thus
$\tau \in \Hp$.
 Conversely, for each $\tau \in \Hp$ there is a $z=z(\tau)$ of the above form with
$\piU(\C z) \in \coneU$. 
The unitary group $\Ug(V)(\R)$ operates on $\Hp$ by fractional linear
transformations. With matrix representation for the basis vectors $\ell$,
$\ell'$, this operation is given by
\begin{equation}\label{eq:isomSUSL}
\Ug(V)(\R) \ni 
\begin{pmatrix}
 A & B \\ C & D 
\end{pmatrix}: \tau \rightarrow
 \frac{A \tau - B \epsilon^{-1}}{ C\epsilon \tau + D}, 
\quad\text{with}\;\epsilon = -\delta \hlf{\ell'}{\ell}.
\end{equation}
The special unitary group $\SU(V)(\R)$ is isomorphic to the special linear group
$\SL_2(\R)$ via
\[
\SL_2(\R) \ni 
\begin{pmatrix}
a & b \\ c & d 
\end{pmatrix} \longmapsto 
\begin{pmatrix}
a & b\epsilon \\ c\epsilon^{-1} & d 
\end{pmatrix} \in \SU(V)(\R).
\]
Note that since $\epsilon \in \DF^{-1}$, the arithmetic subgroup  $\SU(L)$ is
isomorphic to $\SL_2(\Z)$.
In particular, if $\hlf{\ell}{\ell'} = \delta^{-1}$, the isomorphism reduces to
the identity. 

\section{The lattice $L$ as a quadratic module}\label{sec:LandHO}

As a $\Z$-module, the lattice $L$, introduced in the previous section, is
isomorphic to $\Z^4$, namely
\[
 L \simeq  \Z \oplus \zeta \Z \oplus \delta^{-1}\Z \oplus \delta^{-1}\zeta \Z
\simeq \Z^4,
\]
with $\delta$ and $\zeta$ as above. We introduce a non-degenerate indefinite
$\Z$-valued bilinear form on $L$, $\blfempty \vcentcolon = \tr_{\F/\Q}
\hlfempty$. Hence, $L$ acquires the structure of a quadratic $\Z$-module, with
the quadratic form
\[
\Qf{\lambda} \vcentcolon = \frac12 \blf{\lambda}{\lambda} =
\hlf{\lambda}{\lambda} \quad\text{for all}\quad\lambda \in L.
\] 
As such, $L$ is even, since $\Qf{\lambda} \in 2\Z$, for all $\lambda \in L$, and
also unimodular. The signature of $L$ is $(2,2)$. 
We decompose $L$ as a direct sum of two hyperbolic planes over $\Z$, by choosing 
lattice vectors $e_1, e_2, e_3, e_4 \in L$, with $e_1$ and $e_3$ primitive, such that
\[
L = \bigl(\Z e_1 \oplus \Z e_2\bigr) \oplus \bigl( \Z e_3 \oplus \Z e_4\bigr)
\]
and satisfying the following properties:
\begin{equation}\label{eq:defe1234}
\begin{aligned}
\blf{e_i}{e_i} &= 0, \quad i=1,\dotsc, 4 \\
\blf{e_i}{e_j} &= 1,  \quad \text{for}\quad \{i,j\} = \{1,2\}, \{3,4\} \\ 
\blf{e_i}{e_j} &= 0,  \quad\text{otherwise.}
\end{aligned}
\end{equation}
(For a particular choice of these vectors see \eqref{eq:mye1234} in section \ref{sec:embedHp} below.)
Consider the rational quadratic space $\VQ = L\otimes_\Z \Q$ and the real
quadratic space $\VQR = \VQ \otimes_\Q \R$, with $\blfempty$ extended in either
case to a bilinear form on the respective space, $\Q$- or $\R$-valued.  
If we consider $\VF = L\otimes_{\OF}\F$ as a vector space over $\Q$, it is
isomorphic to $\VQ$. Similarly, $\VQR$ is the four-dimensional real vector space
underlying the complex space $\VFC$, with $\blfempty = 2\Re\hlfempty$. 

Let $K=L \cap e_1^\perp \cap e_2^\perp$. Then, $K$ is a $\Z$-submodule of $L$.
The subspace $K\otimes_\Z\R$ of $\VQR$ with the
restriction of $\blfempty$ is a Lorentzian quadratic space, with signature $(1,1)$.
 We introduce the following notation: For $x \in K\otimes_\Z\R$ write 
$x = (x_1, x_2) = x_1 e_3 + x_2 e_4$. \label{Kisdefinedhere}

Denote by $\SO(V)$ the special orthogonal group of $\VQ$ and its set of real points as
$\SO(V)(\R) \simeq \SO(2,2)$. 
We recall some basic facts concerning the symmetric domain for the operation of
$\SO(V)(\R)$, for details, see \cite{Br02} pp.\  78f. 
A model for the symmetric domain is given by the Grassmannian of two-dimensional positive
definite subspaces of $\VQR$, which we denote by $\GrO$.

Another, affine, model is the tube domain model $\HO$, which is realized as
follows. Denote by $\VQC$ the complex four-dimensional quadratic space $\VQC =
\VQR \otimes_\R \C$, with $\blfempty$ extended to a non-degenerate, indefinite
$\C$-valued bilinear form. 

The tube domain model $\HO$ is one of the two connected components of the
following subset of $K\otimes_\Z \C$ 
\[
 \{ Z = X + i Y\, ;\, X,Y \in K\otimes_\Z\R, \,\Qf{Y}>0 \} = \HO \cup \overline{\HO}.
\] 
Denote by $\posQuad$ the set of $Y = (y_1, y_2)\in K\otimes_\Z\R$, with $y_1, y_2 >
0$.  
We fix $\HO$ as the component for which $Y \in \posQuad$. 
Thus, $\HO$ is isomorphic to two copies of the complex upper-half plane
\begin{equation}\label{eq:isomHO2Hp}
\HO \simeq \Hp \times \Hp, \quad\left(z_1, z_2 \right)
= z_1 e_3 + z_2 e_4 
\mapsto (z_1, z_2). 
\end{equation}
Denote by $\piO$ the canonical projection $\piO : \VQC \rightarrow \PQC \simeq
\mathbb{P}^2 \C$. The tube domain can be mapped biholomorphically 
to any one of the two connected components of a positive cone in $\PQC$ given by
\begin{equation*}
\left\{ \bigl[ Z_L \bigr]\,;\, Z_L^2 = 0, \blf{Z_L}{\overline{Z_L}} > 0
\right\}.
\end{equation*}
We fix this component and denote it by $\coneO$. 
For each $[Z_L]$, there is a unique representative $Z_L$ of the form
\begin{equation}\label{eq:normalZL}
Z_L = -\Qf{Z}  e_1 + e_2 + Z, \quad\text{with}\quad Z = z_1 e_3 + z_2 e_4 \in
\HO.
\end{equation}
Conversely, to each $Z \in \HO$ we can associate a unique $Z_L \in
\piO^{-1}(\coneO)$.

Finally, every $Z_L$ as in \eqref{eq:normalZL} can be written in the form $Z_L =
X_L + i Y_L$, with $X_L$, $Y_L\in \VQR$ perpendicular vectors of equal norm, with $X_L^2 = Y_L^2 = Y^2>0$. 
The positive definite two-dimensional (real) subspace of $\VQR$ generated by $X_L$ and $Y_L$,
corresponds to a point in the Grassmannian model $\GrO$.

\section{Embedding of $\Hp$ into the tube domain}\label{sec:embedHp}
Since $V_\F$ is both a hermitian space over $\F$ 
and, as a $\Q$-vector space, a quadratic space, with the bilinear form $\blfempty$ obtained from  
the hermitian form $\hlfempty$, the unitary group $\Ug(V)$ embeds into the special orthogonal group $\SO(V)$.
Similarly, $\Ug(L)$ is embedded into $\SO(L)$.

From this, in turn, one obtains an embedding of the respective symmetric domains.
Such an embedding is constructed in \cite{Ho12} section 4, and \cite{Ho11}, ch.\ 3 in a rather more
general setting.
In the present case, where the signature of $\VQ$ as a quadratic space is
$(2,2)$, this embedding is readily described. 

As in \cite{Ho12}, we introduce the following notation: For a complex scalar $\mu
\in \C \setminus \R$, we denote by $\hat \mu$ the endomorphism of $\VQR$ induced
 from scalar multiplication with $\mu$ in the complex hermitian space $\VFC$.

Now, fix a basis for $L\otimes_\Z\Q$ of the form \eqref{eq:defe1234} by
setting
\begin{equation}\label{eq:mye1234}
\begin{aligned}
e_1 = \ell, && e_3 = - \hat \zeta \ell, && e_2 = 
\left(\frac{\zeta}{\delta\hlf{\ell'}{\ell}}\right)\sphat \ell', &&
 e_4 = \left(\frac{1}{\delta\hlf{\ell'}{\ell}}\right)\sphat \ell'.  
\end{aligned}
\end{equation}
With the identification $\HO \simeq \Hp \times \Hp$ from \eqref{eq:isomHO2Hp}, 
we define an embedding of $\Hp$ into $\HO$ as follows 
\begin{equation} \label{eq:tautoZshort}
\begin{aligned} 
\Hp & \longrightarrow & \Hp \times\Hp  &= \HO\\
\tau & \longmapsto & ( \tau, -\bar\zeta)   &= Z,
\end{aligned}
\end{equation}
where $(\tau, -\bar\zeta) = \tau e_3  -\bar\zeta e_4$, with the notation from section \ref{sec:LandHO} above. 
Clearly, this embedding is biholomorphic.

Recall that $\Hp$ is biholomorphically mapped to the projective cone model
$\coneU$ through the association to $\tau$ of a unique $z$ of the form
\eqref{eq:normalz} with $[z] \in \coneU$ and, similarly $\HO$ is
biholomorphically mapped to a component, $\coneO$, of a projective cone, via $Z
\mapsto [Z_L]$. Thus, the embedding of $\Hp$ into $\HO$ induces a biholomorphic
mapping between  the projective cone models $\coneU$ and $\coneO$, 
as illustrated by the following diagram:
\[
\xymatrix{
**[l] [z] \in \coneU \ar@{^(->}[rr] && \coneO \ni [Z_L] \\
 **[l] \C z \subset \piU^{-1}(\coneU) \ar@{^(->}[rr]  \ar@{->}[u]^{\piU} &&
\piO^{-1}(\coneO) \supset \C Z_L\ar@{->}[u]^{\piO}\\
**[l] \tau \in \Hp \ar@{^(->}[rr] \ar@{<->}[u] && {\Hp\times\Hp \simeq \HO} \ni
Z \ar@{<->}[u]}
\]
Explicitly, if $[Z_L]$ is the image of $[z]$, the induced mapping between the representatives $z$ and $Z_L$, normalized according to \eqref{eq:normalz} and \eqref{eq:normalZL}, respectively, is given by
\begin{equation}\label{eq:ztoZL1234}
z = \ell' - \tau \delta\hlf{\ell'}{\ell}\ell \longmapsto 
Z_L = 
\bar{\zeta} \tau e_1 + e_2 + \tau e_3  - \bar{\zeta} e_4. 
\end{equation}
The real and imaginary parts of $Z_L$ may also be written as
\[
 X_L = \left( \frac{1}{ \hlf{\ell'}{\ell} } \right)\sphat z, \quad\text{and}\quad
 Y_L =  - \left( \frac{ i}{2\hlf{\ell'}{\ell} } \right)\sphat z,
\]
see \cite{Ho12}, section 4. In the present case, this is readily
verified from \eqref{eq:tautoZshort} using \eqref{eq:mye1234}.

\section{The additive lifting to to $\SO(K\otimes_\Z\R)$} \label{sec:PhimK}
In this section, we begin to calculate an additive lift to the special orthogonal group 
$\SO(K\otimes_\Z\R)$ of the Lorentzian space $K\otimes_\Z\R$.
Ultimately, this will allow us to determine the Weyl-vector which appears in the  
Borcherds product expansion of the multiplicative lift.

The symmetric domain of $\SO(K\otimes_\Z\R)$ is most readily described as a
hyperboloid model, cf.\ \cite{Br02}, p.\ 66:
\[
 \mathcal{C}_K = \{ v \in K\otimes_\Z\R; \, v^2 = 1, \blf{v}{e} > 0 \},
\] 
where $e$ is a fixed primitive isotropic lattice vector from $K$. We set $e=e_3$. 

Now, for $Z  = X + i Y \in \HO$, the imaginary part $Y$ is contained in
$K\otimes_\Z\R$ and satisfies $Y^2 = y_1 \cdot y_2 >0$, and further, by
construction, $y_1$, $y_2> 0$. Thus, $\frac{Y}{\abs{Y}}$ corresponds to a point
$v$ in the hyperboloid model. 

Denote by $\Phi^K_m(Y)$ the additive lift to $\SO(K\otimes_\Z\R)$ of the non-holomorphic Poincar\'{e} series
$F_m$. 
It is given by the following expression, according to \cite{Br02}, p.\ 56, 
where $(b_+, b_-) = (1,1)$ is the signature of $K$, 
see remark \ref{rmk:factor12} for the additional factor of $\frac12$: 
\begin{equation}\label{eq:int_complicated}
\begin{split}
\Phi_{m}^K(v,s) & = \frac12
\frac{2(4\pi\abs{m})^{-k/2}}{\Gamma(2s)}\;\times \\
&  \sum_{\substack{\lambda \in \beta + K \\ \Qf{\lambda} = m}}
\int_{0}^{\infty} {\mathbf{M}}_{-k/2, s-1/2} \bigl( 4\pi \abs{m}y \bigr)
y^{b^-/4 +
  b^+/4 -2} \exp\bigl(-4\pi y \Qf{\lambda_v} + 2\pi y m\bigr)\, dy,
\end{split}
\end{equation}
where $\mathbf{M}_{\nu,\mu}$ denotes the $M$-Whittaker function, as defined in
\cite{AbSt}, p.\ 190, and $\lambda_v$ denotes the projection of $\lambda$ 
to $\R v$ for $v\in\mathcal{C}_K$.

In the present case, the signature of $K\otimes_\Z\R$ is $(1,1)$, hence $b_+ =
b_- = 1$, $l = 2$ and $k = 1 -l/2 = 0$. The regularized  lifting is obtained by
evaluating \eqref{eq:int_complicated} at $s_0 = 1 - k/2 = 1$. 
Since $K$ is unimodular, summation in \eqref{eq:int_complicated} extends over
all $\lambda \in K$, with $\Qf{\lambda} = m$.

Hence, the integral takes a much simpler form, which can be evaluated directly: 
\begin{multline}\label{eq:int_simple}
\int_0^\infty \mathbf{M}_{0,1/2}(4\pi\abs{m}y)y^{-3/2} 
e^{-4\pi y \QfNop(\lambda_v) -   2\pi y \abs{m}} dy 
= 4\pi \left(\sqrt{\Qf{\lambda_v} + \abs{m} } -
\sqrt{\Qf{\lambda_v}}\right).
\end{multline}
\begin{remark}
 In fact, in \cite{Br02}, p.\ 56 the integral 
is evaluated using a formula for the Laplace transform of the Whittaker function from
\cite{erdelyi2}, p.\ 215.
Unfortunately, the formula given there is wrong for the particular combination of 
parameters needed here, so that the result in \cite{Br02}, while correct in the 
other cases, does not apply if $(b_+,b_-) = (1,1)$.
\end{remark}
Now, the expression for $\Phi_m^K$ can easily be rewritten in terms of $Y$, with
$v = \frac{Y}{\abs{Y}}$. 
For the lattice vectors $\lambda$ occurring in the sum, the projection to $\R v$
is given by
\[
\lambda_v = \frac{\blf{\lambda}{Y} Y}{ Y^2} \quad\text{and}\quad
 \sqrt{\QfNop(\lambda_v)} = \frac{\abs{\blf{\lambda}{Y}}}{\sqrt{2} \abs{Y}}.
\]
Also, we have 
\[
\Qf{\lambda} = \Qf{\lambda_v} + \Qf{\lambda_{v^\perp}} = m <0, 
\]
where $\lambda_{v^\perp}$ denotes the projection to the orthogonal complement
$v^\perp$. Since $K\otimes_\Z\R$ is 2-dimensional, for the complement, it
suffices to choose one vector $Y'$ perpendicular to $Y$.
For $Y = (y_1, y_2)$, we set $Y'\vcentcolon = (y_1, -y_2)$, so that $\abs{Y'} = \abs{Y}$.

Thus, for $\Phi_m^K(Y)$, we have, from \eqref{eq:int_complicated} by way of
\eqref{eq:int_simple}: 
\[
\begin{aligned}
\Phi^K_m(Y) &
= \frac{4\pi}{\sqrt{2}\abs{Y}} \sum_{\substack{\lambda \in K \\ \Qf{\lambda} =
m}} 
 \left( \abs{\blf{\lambda}{Y'}}  - \abs{\blf{\lambda}{Y}}   \right) \\
 & = \frac{2 \sqrt{2}\pi}{\abs{Y}} \sum_{\substack{\lambda \in K \\ \Qf{\lambda}
= m}} 
 \left( \abs{- \lambda_1 y_2 + \lambda_2 y_1 } - \abs{ \lambda_1 y_2 + \lambda_2
y_1 }   \right).
\end{aligned}
\]
Write $\lambda$ in the form $\bigl(t, \frac{m}{t} \bigr)$, with $t$ an integer
dividing $m$. Then, 
\begin{equation}\label{eq:phiK_lambda_abs}
\begin{aligned}
\Phi^K_m(Y) & = \frac{2 \sqrt{2} \pi}{\abs{Y}} \sum_{\substack{t \in \Z \\ t
\mid m }}
\left(\abs{ - t y_2 + \frac{m}{t} y_1 } - \abs{ t y_2 + \frac{m}{t}  y_1 }
\right) \\
& =  \frac{4 \sqrt{2} \pi}{\abs{Y}} \sum_{\substack{t \in \Z, t>0 \\ t \mid m }}
\left(\abs{ - t y_2 + \frac{m}{t} y_1 } - \abs{ t y_2 + \frac{m}{t}  y_1 }
\right),
\end{aligned}
\end{equation}
where, in the second line, only one of the two vectors $\lambda$ and $-\lambda$
occurs in the sum.
\begin{remark}
The convention adopted here, concerning the signs of $\lambda_1$ and $\lambda_2$
differs from that in \cite{Ho11}, chapter 5. Also, there, the additive lift is 
calculated for a different Poincar\'{e} series, see remark \ref{rmk:factor12}.
\end{remark}

\section{Weyl-chambers of $K\otimes_\Z\R$}\label{sec:weylchambers_K}

In the context of Borcherds' theory, Weyl-chambers are connected components of
the symmetric domain, to each of which we can associate a different Borcherds
product expansion, with different Weyl-vectors. In fact, the Weyl-chambers are
related most closely to the negative norm vectors occurring in the expansion
\eqref{eq:phiK_lambda_abs} of the \lq wall-crossing term\rq\ $\Phi_m^K$. 

\paragraph{Weyl-chambers in $\mathcal{C}_K$}
For the following cf.\ \cite{Br02}, ch.\ 3.1. 
Let $\kappa \in K$ be a lattice vector of negative norm. Denote the orthogonal
complement of $\kappa$ in $K\otimes_\Z\R$ by $\kappa^\perp$. It corresponds to
a sub-manifold of $\mathcal{C}_K$ of codimension $1$, and hence to a line in the $Y$-plane.
 This is an example for a \emph{Heegner-divisor}. More generally, see \cite{Br02}, 
Heegner-divisors can be defined as locally finite unions of codimension one sub-manifolds.

A Heegner divisor of index $m$ is a locally finite union of co-dimension one
sub-manifolds of $\mathcal{C}_K$, 
\[
\HeegO(m) = \bigcup_{\substack{\lambda \in K \\ \QfNop(\lambda) =
m}}\lambda^\perp.
\]    
The Weyl-chambers of index $m$ are defined as the connected components of 
\[
\mathcal{C}_K - \HeegO(m).
\] 
Also, finite intersections of Weyl-chambers in $\mathcal{C}_K$ are
again called Weyl-chambers. 

For the Borcherds lift of $j_n$, or, for that matter, Bruinier's
extended Borcherds-lift of $F_m$, it suffices to consider the Weyl-chambers of
index $m=-n$.

 More generally, for an input function $f \in \Mweak_0(\Gamma)$, with principal
part $\sum_{m<0}c(m)q^m$, one considers Weyl-chambers given by the connected
components of 
\[
\mathcal{C}_K - \bigcup_{\substack{ m \in \Z, m<0 \\ c(m) \neq 0}} \HeegO(m).
\]
Under the identification of $\mathcal{C}_K$ with $\posQuad$, the quadrant of the $Y$-plane where $y_1$, $y_2 >0$, 
the Weyl-chambers become subsets of $\posQuad$, bounded by lines perpendicular to lattice vectors in $K$ of negative norm.

\paragraph{Weyl-chambers in $\HO$ and $\Hp$}\label{par:defWchHU}
Let $W$ be a Weyl-chamber in $\mathcal{C}_K \simeq \posQuad$ of index $m$, $m\in \Z_{<0}$.
Then, the set of $Z = X + iY \in \HO$, with $Y$ contained in $W$, 
is called a Weyl-chamber in $\HO$, of index $m$, and also denoted by $W$. 

Finally,  Weyl-chambers in $\Hp$ are defined through the embedding \eqref{eq:tautoZshort} of $\Hp$ into $\HO$: 
 Let $W \subset \HO$ be a Weyl-chamber. 
The set of $\tau$, whose image, $\tau e_3 - \bar{\zeta} e_4$, is
contained in $W$, is called a Weyl-chamber of $\Hp$, also denoted as $W$.
Hence, for $W$ a Weyl-chamber in $\posQuad$, a Weyl-chamber of $\Hp$ is given by  
\begin{equation}\label{eq:defWinHp}
W \vcentcolon = \left\{ \tau = u + iv \in \Hp\,;\, (v, {\textstyle{\frac12}
}\abs{\delta}) \in W  \subset \posQuad\right\}. 
\end{equation}

\paragraph{Explicit description in $\posQuad$} For $m$ a fixed negative integer, we describe
the Weyl-chambers of index $m$ more explicitly. 

For $\lambda \in K$ with $\Qf{\lambda} = m$, write $\lambda = (t, \frac{m}{t})$ with $t\in\Z$, $t \mid m$. 
Since $\lambda^\perp = (-\lambda)^\perp$, it suffices to consider $t>0$.  
The Heegner-divisor $(\pm \lambda)^\perp$ corresponds to the line in $\posQuad$ spanned by $(t, \frac{\abs{m}}{t})$.

Thus, the Weyl-chambers of index $m$ are given by the subsets of $\posQuad$ bounded by such lines, 
as $t$ runs through the positive divisors of $\abs{m}$, and by the boundary components of $\posQuad$, the $y_1$- and $y_2$-axis.
Denote by $d(\abs{m})$ the number of positive divisors of $\abs{m}$ and arrange these divisors by size:
\[
1 = t_1 \leq t_2 \leq \dotsb \leq t_{d(\abs{m})} = \abs{m}.
\]  
Formally, include $t_0 = 0$ and $\infty$, with $0 < t_i$ and $t_i<\infty$, for all $1\leq i \leq d(\abs{m})$.
Then, the Weyl-chambers of index $m$ are given by precisely the following subsets of $\posQuad$:
\begin{equation} \label{eq:def_WtiHO}
\begin{gathered}
W(t_i, t_{i+1}) \vcentcolon = \left\{ Y \in \posQuad;\,
 \frac{t_i^2}{\abs{m}} y_2 < y_1 < \frac{t_{i+1}^2}{\abs{m}} y_2 \right\}, 
\quad\text{for $i=1, \dotsc, d(m)$,} \\
W(0,1) \vcentcolon  = \{ Y \in \posQuad;\, y_1 < \abs{m}^{-1} y_2 \},\qquad
W(\abs{m}, \infty) \vcentcolon = \{ Y \in \posQuad;\, \abs{m} y_2 < y_1 \},
\end{gathered}
\end{equation}
where $W(0,1)$ and $W(\abs{m}, \infty)$ are bounded by the Heegner divisor $(1, \abs{m})$ 
and the $y_1$-axis, and by $(\abs{m},1)$ and the $y_2$-axis, respectively.
Note that passing through increasing $t_i$, the Weyl-chambers are traversed in
clockwise orientation.

\begin{remark}
In $\posQuad$ there are $d(\abs{m}) + 1$ Weyl-chambers of index $\abs{m}$.
These come in mirror-symmetric pairs, with, if $\abs{m}$ is not a perfect square,
 the exception of a middle Weyl-chamber containing the diagonal.   
   Reflection along the diagonal takes $W(0,1)$ to $W(\abs{m},
\infty)$ and, generally, $W(t_i, t_{i+1})$ to $W(t_{d(\abs{m}) - i},
t_{d(\abs{m})+1 - i})$. If $\abs{m}$ is a square, the diagonal is given by a
Heegner-divisor as $\sqrt{\lvert m\rvert } \in \Z_{>0}$. Otherwise, the diagonal is
contained in a \lq middle\rq\ Weyl-chamber, $W(t_i, t_{i+1})$, with
$i = d(\abs{m})/2$, which is mirror-symmetric to itself.
\end{remark}

\paragraph{Explicit description in $\Hp$} From \eqref{eq:defWinHp}, we see that Weyl-chambers
 are strip-shaped regions of the upper-half plane, bounded by lines parallel to the real axis.
For the Weyl-chamber $W(t_i, t_{i+1})$ in $\posQuad$, the corresponding Weyl chamber in $\Hp$ is given by
\begin{equation}\label{eq:defWtiHU}
W(t_i, t_{i+1}) = \left\{ \tau \in \Hp\,;\;  \frac12 \abs{\delta}
\frac{t_i^2}{\abs{m}}  < \Im \tau < \frac12 \abs{\delta} 
\frac{t_{i+1}^2}{\abs{m}}\right\},
\end{equation}
for $i=0,1\dotsc, d(m)$. The \lq topmost\rq\ Weyl-chamber, $W(\abs{m},
\infty)$ is the half-plane given by $2 \Im\tau > {\abs{\delta} \abs{m}}$.

\section{$\Phi^K_m(Y;W)$ and the Weyl-vector $\rho_m(W)$} \label{sec:PhimW_and_rho}
Now, let $W$ be a Weyl-chamber of index $m$ and $\lambda
\in K$ a lattice vector with norm $m$.  Then, $\blf{\lambda}{Y}$ is non-zero for all $Y \in W$, 
and thus has constant sign on $W$.
We introduce the following notation: If $\blf{\lambda}{Y} >0$ for all $Y \in W$,
we write $\blf{\lambda}{W}>0$ and, similarly, $\blf{\lambda}{W}<0$ if $\blf{\lambda}{Y}<0$ for all $Y\in W$. 

Since $Y \in \posQuad$, and thus $y_1, y_2 >0$, the expression from \eqref{eq:phiK_lambda_abs} takes
the form
\begin{align}
\Phi_m^K(Y;W) & = \frac{4 \sqrt{2} \pi }{\abs{Y}}
\Bigl( \sum_{\substack{ t \in \Z_{>0} \\ t \mid m \\ \blfp{ (t, m/t), W }>0}}
 \left(  t y_2 - \frac{m}{t} y_1  - t y_2 -  \frac{m}{t}  y_1 \right) \notag \\
& + \sum_{\substack{ t \in \Z_{>0} \\ t \mid m \\ \blfp{ ( t, m/t), W }<0}}
 \left(  t y_2 - \frac{m}{t} y_1  + t y_2 +  \frac{m}{t}  y_1  \right)\Bigr) \notag\\
&   = \frac{8 \sqrt{2} \pi }{\abs{Y}}
\Bigl( \sum_{\substack{ t \in \Z_{>0} \\ t \mid m \\ \blfp{ (t, m/t), W }>0}} 
 \frac{\abs{m}}{t} y_1  + 
 \sum_{\substack{ t \in \Z_{>0} \\ t \mid m \\ \blfp{ (t, m/t), W }<0}}
  t y_2 \Bigr). \label{eq:PhiKmW_gen}
\end{align}  
Note that, since $m/t <0$ and $Y \in \posQuad$, we have $\blf{\lambda}{Y'} <0$
for all $Y'$.
For Weyl chambers $W(t_i, t_{i+1})$, as introduced in \eqref{eq:def_WtiHO}
above, with $(t_i, t_{i+1})$ either $(0, 1)$, $(\abs{m}, \infty)$ or $t_i$, $t_{i+1}$ two successive divisors of $\abs{m}$, the summation can be made more explicit:
 
For $\lambda \in K$ of the form $(t, \frac{m}{t})$, with $t$ a positive divisor
of $\abs{m}$, we have  
\[
\operatorname{sign} \blf{ \lambda }{ \bigl(t_i, \frac{\abs{m}}{t_i} \bigr) } = 
\begin{cases}
  +1  & \quad\text{if}\quad t > t_i, \\
  \phantom{-}0 & \quad\text{if}\quad t = t_i, \\
  -1  & \quad\text{if}\quad t < t_i. 
\end{cases}
\]
It follows that $\blf{\lambda}{W(t_i, t_{i+1})}>0$ if $t \geq t_{i+1}$, 
whereas $\blf{\lambda}{W(t_i, t_{i+1})}<0$, if $t\leq t_i$. 

In particular, for the \lq outermost\rq\ Weyl-chambers $W(0,1)$ and $W(\abs{m},
\infty)$, the sign is fixed for all $\lambda$ of the above form, i.e.\ 
$\blf{\lambda}{W(0,1)} > 0$ and $\blf{\lambda}{W(\abs{m}, \infty)} <0$.

Thus, for $\Phi_m^K(Y; W)$ we get
\begin{gather*}
\Phi_m^K(Y;W(t_i, t_{i+1})) =  \frac{8 \sqrt{2} \pi }{\abs{Y}} \left(
\sum_{ \substack{t\mid m \\  t \geq t_{i+1} } }  \frac{\abs{m}}{t} y_1 
 +  \sum_{ \substack{t\mid m \\  0 < t \leq t_{i} } }  t y_2 \right). \\
\intertext{In particular, }
\Phi_m^K(Y; W(0,1)) = 
\frac{8\sqrt{2} \pi}{\abs{Y}} \cdot  \sigma_{\abs{m}} y_1, \qquad
\Phi_m^K(Y; W(\abs{m},\infty)) = \frac{8\sqrt{2} \pi}{\abs{Y}} \cdot 
\sigma_{\abs{m}} y_2. 
\end{gather*}

\paragraph{The Weyl-vector} 
The Weyl vector $\rho_m(W)$ attached to a Weyl-chamber $W$ is defined through the following equation (cf.\  
\cite{Bo98}, section 10, or \cite{Br02}, p.\ 69):
\[
\Phi_m^K(Y;W) = 8\sqrt{2}\pi \blf{\frac{Y}{\abs{Y}}}{ \rho_m(W)}.
\]
 Hence, from \eqref{eq:PhiKmW_gen}, we have 
\begin{gather}
\rho_m(W) = 
 \sum_{\substack{ t \in \Z_{>0} \\ t \mid m \\ \blfp{ (t, m/t), W }<0}}
  t e_3  + 
\sum_{\substack{ t \in \Z_{>0} \\ t \mid m \\ \blfp{ (t, m/t), W }>0}} 
 \frac{\abs{m}}{t} e_4.  \nonumber \\
\intertext{In particular, for $W = W(t_i, t_{i+1})$, we get the expression }
\rho_m(W(t_i, t_{i+1})) =   \sum_{ \substack{t\mid m \\  0 < t \leq t_{i} } }  
t e_3  +
 \sum_{ \substack{t\mid m \\  t \geq t_{i+1} } }  \frac{\abs{m}}{t} e_4. \label{eq:rhom_Wti}
\end{gather}

\section{The Weyl-vectors for $j_n$} \label{sec:correction} 

In the previous sections, we have calculated the Weyl-vectors for the lifting of
the non-holomorphic Poincar\'{e} series $F_m$, for $m\in \Z, m<0$. As we want
to determine the Borcherds lift of $j_n$, $n>0$, for the Weyl-vectors, we
 must correct for the fact that for $n=\abs{m}$, $j_n (\tau) = F_m(\tau,1) - b_m(0,1)$. 
The lifting is additive in the Weyl-vector, hence we need only find the
Weyl-vector of the constant $b_m(0,1)$. 
To this aim, we calculate the Borcherds product $\Psi_L(Z;1)$, on $\SO(2,2)$,
for the lift of the constant function $f=1 \in \Mweak_0(\Gamma)$.  By \cite{Bo98}, theorem 13.3, the result is a modular form of weight $\frac12$ with some multiplier
system for a modular group in $\SO(2,2)$. 

Since the function $1 = q^0$ has no principal part, there are no Heegner
divisors and the positive cone $\posQuad$ is the only Weyl-chamber. The only
non-zero coefficient occurring is the constant $c(0) = 1$, and the Borcherds
product takes the form 
\[
\Psi_L(Z,1) = e\left( \blf{\rho_f}{Z}\right) \prod_{\substack{m\in \Z \\ m>0}}
\left(1 - e(m z_1)\right)
 \prod_{\substack{m\in \Z \\ m>0}}\left(1 - e(m z_2)\right),
\]  
since $m$ and $n$ must satisfy $\Qf{\lambda} = mn = 0.$ 

With the usual identification $\HO \simeq \Hp \times \Hp$, the Borcherds product
$\Psi_L(Z,1)$ becomes a modular form of parallel weight $(\frac12, \frac12)$
under the operation of $\SL_2(\Z)\times\SL_2(\Z)$. 

By comparing the corresponding product expansions, it is easily seen that
\[
\Psi_L(z_1, z_2; 1) = \eta(z_1)\cdot \eta(z_2), 
\]
up to, possibly, a constant factor of absolute value 1,
since both functions transform with the same weight.
 It follows that the Weyl-vector term is equal to $e\bigl( \frac{1}{24} (z_1 + z_2)\bigr)$ and the
Weyl-vector is given by
\[
\rho_{f=1}(\posQuad) = \frac{1}{24} \bigl(e_3 + e_4\bigr).
\]
\begin{remark}
Through pull-back under the embedding $\Hp\hookrightarrow \HO$ from \eqref{eq:tautoZshort}, from $\Psi_L(z_1, z_2; 1)$, 
we recover a Borcherds product for the lift $\Xi(\tau;1)$ of $f=1$ on $\Ug(1,1)$, in the form
\[
\Xi(\tau; 1) = \eta(\tau) \cdot \eta(-\bar\zeta). 
\] 
\end{remark}
The contribution due to the constant $b_m(0,1)$ to the Weyl-vector $\rho_m(W)$
of the non-holomorphic Poincar\'{e} series is, simply, 
\[
\rho_{0,m} \vcentcolon = \sigma(\abs{m})\bigl( e_3 + e_4\bigr)  = b_m(0,1) \cdot
\rho_1(\posQuad).
\]
Hence, for $j_{\abs{m}}$, the Weyl-vector attached to a Weyl chamber $W$ is
given by
\[
\rho(j_{\abs{m}}; W) = \rho_m(W) - \rho_{0,m}. 
\]
In particular, for $W = W(t_i, t_{i+1})$, we obtain
\begin{equation}\label{eq:rho_ofjm}
\begin{gathered}
\rho\bigl(j_{\abs{m}}; W(t_i, t_{i+1})\bigr) = - \Bigl( \sum_{\substack{t \mid
\abs{m} \\  t \geq t_{i+1}} } t e_3
 + \sum_{\substack{t \mid \abs{m} \\ 0 < t \leq t_i} } \frac{\abs{m}}{t}
e_4\Bigr), \\
\rho\bigl(j_{\abs{m}}; W(0,1)\bigr) = - \sigma(\abs{m}) e_3, \qquad
\rho\bigl(j_{\abs{m}}; W(\abs{m}, \infty)\bigr) = - \sigma(\abs{m}) e_4. 
\end{gathered}
\end{equation}

\begin{remark}\label{rmk:for_flcm}
 Let $f$ be a weakly holomorphic modular form of weigth $0$,  $f\in\Mweak_0(\Gamma)$, 
with Fourier expansion $f = \sum_{m \gg -\infty} c(m)q^m$. Then, $f$ is a linear combination
of $j_n$ (for $n\in\Z_{>0}$, $c(-n) \neq 0$). 
The Weyl-chambers in $\HO$, and thus in $\Hp$, attached to $f$ can be written in
the form
\begin{equation}\label{eq:decompWf}
W = \bigcap_{\substack{ m < 0 \\ c(m) \neq 0}}  W_m,
\end{equation}
cf.\ \cite{Br02} p.\ 88, with $W_m$ Weyl-chambers of index $m$, as considered in the previous sections. 
 Since $f$ can be represented as a linear combination of either non-holomorphic
Poincar\'{e} series $F_m$ or of the modular functions $j_n$, 
the Weyl-vector for $f$ is given by a sum of Weyl-vectors attached to these
objects. In particular, with $W$ decomposed as in \eqref{eq:decompWf},
\[
\rho(f; W) 
= \sum_{\substack{ m < 0 \\ c(m) \neq 0}} c(m) \rho_m \bigl( W_m \bigr) 
= \sum_{\substack{ m < 0 \\ c(m) \neq 0}} c(m) \rho\bigl( j_{\abs{m}}; W_m\bigr)
+ c(0) \rho_1(\posQuad).
\] 
\end{remark}

\section{Heegner divisors for $\Ug(L)$}\label{sec:heegnerU}
Let $\lambda \in L$ be a lattice vector of negative norm,
$\hlf{\lambda}{\lambda} = m \in \Z_{<0}$. 
The complement of $\lambda$ in the hermitian space $\VFC$ is a subspace of
codimension one. 
Its image under the canonical projection defines a closed analytic subset of
codimension one in $\coneU$,
\[
\HeegU(\lambda) \vcentcolon = \left\{ [z] \in \coneU ; \hlf{z}{\lambda} = 0
\right\}.
\]
Since $\Hp \simeq \coneU$ we can associate to
$\HeegU(\lambda)$ a point $\tau_\lambda \in \Hp$,
defined by
\begin{equation}\label{eq:deftaul}
 \hlf{ z(\tau_\lambda) }{\lambda} = 0,
\quad\text{where, as usual,}\quad
z(\tau_\lambda) = \ell' - \tau_\lambda\delta\hlf{\ell'}{\ell} \ell.
\end{equation}
The $\Ug(L)$-orbit of $\tau_\lambda$ defines an invariant divisor on $\Hp$, the
preimage of a 
Heegner-divisor under the map $\Hp \rightarrow \Ug(L) \backslash \Hp$.
More generally, a Heegner-divisor on $\Hp$ of index $m$ is the
$\Ug(L)$-invariant divisor given by
\begin{equation}\label{eq:defHm}
\HeegU(m) = \sum_{\substack{\kappa \in L \\ \hlf{\kappa}{\kappa} = m }} \left[
\tau_\kappa \right]. 
\end{equation}
Now let $\lambda = \lambda_1 \ell + \lambda_2 \ell'$. From the definition
\eqref{eq:deftaul} of the Heegner-point $\tau_\lambda$ we get
\[
\tau_\lambda  
= \overline{\left(\frac{\lambda_1}{\lambda_2 \epsilon}\right)},
\quad\text{where}\quad \epsilon = -\delta\hlf{\ell}{\ell'},
\] 
as in \eqref{eq:isomSUSL}. Since $\epsilon$, $\lambda_1$ and $\lambda_2$ are
contained in (a fractional ideal of) $\F$, the point $\tau_\lambda$ lies in
$\F$, considered as a subset of $\C$. 
Its minimal equation is given by
\begin{gather}
0 = \tau + \tr\bigl(\tau_\lambda \bigr) \tau + \norm \bigl( \tau_\lambda \bigr)
\tau,  \notag \\
 \text{whence}\quad
0 = \norm(\lambda_2\epsilon)\tau^2 + \tr\bigl( \lambda_1 \bar\lambda_2
\bar\epsilon \bigr) \tau + \norm(\lambda_1). \label{eq:tau_mineq}
\end{gather}
The discriminant of the latter equation is $D = \tr(  \lambda_1 \bar\lambda_2
\bar\epsilon )^2 - 4\norm(\lambda_1)\norm(\lambda_2)$.\\  
Since $m = \Qf{\lambda}$, we have
\[
m = \hlf{\lambda}{\lambda} = \frac{1}{\delta} \left( 
   \lambda_1 \bar\lambda_2 \bar\epsilon - \lambda_2 \epsilon \lambda_1
    \right) =  \frac{2}{\abs{\delta}} \Im\bigl( \lambda_1 \bar\lambda_2
\bar\epsilon\bigr).
\]
Hence, the discriminant $D$ can be expressed in terms of $m$:
\[
D = \tr\bigl(\lambda_1 \bar\lambda_2 \bar\epsilon\bigr)^2 
  - 4 \left(  \Re^2 \bigl(  \lambda_1 \bar\lambda_2 \bar\epsilon  \bigr) 
      + \Im^2 \bigl(  \lambda_1 \bar\lambda_2 \bar\epsilon  \bigr) 
  \right) 
 = - \abs{\delta}^2 m^2  
= m^2 \DF.
\] 
Let $q$ be the $\operatorname{gcd}$ of the coefficients in \eqref{eq:tau_mineq}.
Then, the minimal equation for $\tau_\lambda$ has discriminant $D' = (m/q)^2
\DF$ and the CM-order $\mathcal{O}_{\tau_\lambda}$ of the  Heegner-point $\tau_\lambda$, cf.\
\cite{LangEF} chapter 5, §1,  can be written as a $\Z$-module of the form
\[
\mathcal{O}_{\tau_\lambda} = \Z + \frac{D' + \sqrt{D'}}{2} \Z = \Z +
\frac{\abs{m}}{q} \OF.
\] 
Thus, $\mathcal{O}_{\tau_\lambda}$ has conductor $\abs{m}/q$.

\section{Borcherds Products}\label{sec:BoProd}
The main result from \cite{Ho11}, \cite{Ho12} is the construction of the
Borcherds lift for unitary groups $\Ug(1,n)$. This lifting takes weakly holomorphic
modular forms transforming under a Weil-representation of $\SL_2(\Z)$ to
meromorphic automorphic forms on the symmetric domain of the unitary group. 
These automorphic forms have infinite
product expansions as Borcherds products and take their singularities and poles
along Heegner divisors. 
For $\Ug(1,1)$, they are meromorphic elliptic modular forms with
(possibly) a multiplier system of finite order. Further, in the present setting,
the lattice $L$ is unimodular. Hence, for the lift $\Xi(j_n; z)$, the Borcherds
product expansion from the main theorem of \cite{Ho12} takes the following form
(cf.\ \cite{Ho12}, Corollary 8.1):
\begin{equation}\label{eq:boprod_z}
\Xi(j_n; z) = C e\left( \frac{\hlf{z}{\rho(j_n; W)}}{\hlf{\ell'}{\ell}}\right)
\prod_{\substack{ \lambda \in K \\ \blf{\lambda}{W}>0 }}\left( 1 - e\left(
\frac{\hlf{z}{\lambda}}{\hlf{\ell'}{\ell}}  \right)\right)^{c(\QfNop(\lambda))},
\end{equation} 
where, as usual, $K\simeq\Z^2$ is the Lorentzian $\Z$-sublattice of $L$ introduced
in section \ref{sec:LandHO} and 
$z = \ell' - \tau\delta\hlf{\ell'}{\ell}\ell \in \piU^{-1}(\coneU)$.
 Finally, $C$ is a constant of absolute value $1$.
The above product expansion depends explicitly on the choice of $\ell, \ell' \in L$ 
and on the way $\Hp$ is realized as an affine model of the projective positive cone
$\coneU \subset \PFR$. Below, for theorem \ref{thm:mainU11}, we will rewrite 
the product without this dependence. 

First, however, we examine the Weyl-chamber conditions occurring in \eqref{eq:boprod_z}. 
In contrast to the Weyl-chamber conditions considered in section 
 \ref{sec:weylchambers_K}, here, $\blf{\lambda}{W}>0$ is considered for all
$\lambda \in K$ with $\Qf{\lambda} > 0$ and $c\left(\Qf{\lambda}\right) \neq
0$. 
Write $\lambda$ in the form $(l,k) = le_3 + ke_4$. For $\tau \in \Hp$ the image
$Z =(\tau, - \bar\zeta)$  in $\HO = \Hp \times\Hp$ has imaginary part $Y =
(\Im\tau, \frac12 \abs{\delta})$. Thus, the Weyl-chamber condition takes the
form
\begin{equation}\label{eq:WChcond1}
\blf{Y}{\lambda} = k \Im \tau + l \frac{\abs{\delta}}{2} > 0,\quad (\forall \tau
\in \Hp).
\end{equation}
\begin{remark}
 The Weyl-chamber condition can also be defined as in \cite{Ho12}. The resulting
inequality for $\Im\tau$ is the same, as is readily calculated with the
expression for $e_3$ and $e_4$ from \eqref{eq:mye1234}
\begin{equation*}
0 < \Im \frac{\hlf{z}{\lambda}}{\hlf{\ell'}{\ell}} = k \Im\tau + l
\frac{\abs{\delta}}{2}.
\end{equation*}
\end{remark}
Now for $j_n$, the attached Weyl-chambers have index $m= -n$ and we write them in the form
$W(t_i,t_{i+1})$. By definition, for every $\tau \in W(t_i, t_{i+1})$, we have
\[
\frac{\abs{\delta}}{2}\frac{t_i^2}{n} < \Im \tau.
\]
Inserting this into \eqref{eq:WChcond1}, we get the following version of the
Weyl-chamber condition for $\lambda = (l,k)$ and $W=W(t_i, t_{i+1})$
\begin{equation}\label{eq:WChcond2}
 l > -k \frac{t_i^2}{n}, 
\end{equation}
which is, in particular, satisfied if $l$ and $k$ are both positive. 

We are now ready to formulate our main theorem: 
\begin{theorem}\label{thm:mainU11}
 Let $L$ be the even hermitian lattice $\OF\oplus\DF^{-1}$ with a hermitian form of signature $(1,1)$  
and $\Ug(L)$ its group of isometries.
Let $j_n \in \Mweak_0(\Gamma)$ be the unique weakly holomorphic modular form with Fourier expansion 
$q^{-n} + \sum_{m>0} c(m)q^m$. Then, there is a meromorphic function $\Xi(j_n)$ on the complex upper 
half-plane $\Hp$, the Borcherds lift of $j_n$, with the following properties
\begin{enumerate}
 \item $\Xi(j_n)$ is a meromorphic modular form of weight $0$ for
$\Ug(L)$ with a multiplier system of (at most) finite order.
 \item The zeros and poles of $\Xi(j_n)$ lie along the Heegner divisor
$\HeegU(-n)$, as defined in section \ref{sec:heegnerU},
  \[
   \Div\bigl(\Xi(j_n)\bigr) = \frac12 \HeegU(-n) = 
\frac{1}{2}\sum_{\substack{\lambda\in L \\ \hlf{\lambda}{\lambda} = -n}} [\tau_\lambda].
  \] 
\item For each Weyl chamber $W$, $\Xi(j_n)$ has an infinite product
expansion, absolutely convergent off the set of poles in the half-plane
$\abs{\delta}\Im\tau > 2 n$. For $W = W(t_i, t_{i+1})$, as in \eqref{eq:defWtiHU}, 
it takes the
form
\[
\Xi\bigl(\tau; j_n, W(t_i, t_{i+1})\bigr) 
= C e\left( \rho_2\tau - \bar\zeta\rho_1 \right) 
\prod_{\substack{l,k \in \Z \\ n l > -k t_i^2}} \left( 1 - e\bigl( l \tau
- k \bar\zeta \bigr)\right)^{c(kl)},
\]
where $C$ is a constant of absolute value one, 
and the components $\rho_1$ and $\rho_2$ of the Weyl-vector are given by 
\[
\rho_1 = -\sum_{\substack{t \mid n \\  t \geq t_{i+1}} } t, \qquad
\rho_2 = -\sum_{\substack{t \mid n \\ 0 < t \leq t_i} } \frac{n}{t}.
\]
\end{enumerate}
\end{theorem}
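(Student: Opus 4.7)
The plan is to derive the theorem by specializing the general Borcherds product formula \eqref{eq:boprod_z} (Corollary 8.1 of \cite{Ho12}) to the present setting and translating each quantity through the identifications set up in sections \ref{sec:LandHp}--\ref{sec:embedHp} and \ref{sec:heegnerU}. Parts (1) and (2) follow directly by applying Theorem 8.1 of \cite{Ho12} to the input $j_n \in \Mweak_0(\Gamma)$: the weight of the lift equals $\tfrac12 c_{j_n}(0) = 0$ since $j_n$ has vanishing constant term, the multiplier is of finite order, and the divisor is supported on $\HeegU(-n)$ with the multiplicity $\tfrac12$ dictated by $c_{j_n}(-n) = 1$. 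The description of $\HeegU(-n)$ as a sum of CM-points $[\tau_\lambda]$ is the content of section \ref{sec:heegnerU}.

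For part (3) I would substitute into \eqref{eq:boprod_z} and reduce each of the three ingredients to an expression involving only $\tau$ and $\bar\zeta$. Using \eqref{eq:mye1234} together with the normalization \eqref{eq:normalz}, a short calculation gives
\[
\frac{\hlf{z}{le_3 + ke_4}}{\hlf{\ell'}{\ell}} = l\tau - k\bar\zeta,
\]
so for $\lambda = (l,k) \in K$ the general factor becomes $\bigl(1 - e(l\tau - k\bar\zeta)\bigr)^{c(kl)}$, using $\Qf{\lambda} = kl$. Applying the same formula with $\lambda = \rho_1 e_3 + \rho_2 e_4$, where $\rho_1, \rho_2$ are read off \eqref{eq:rho_ofjm}, produces the leading exponential $e(\rho_2 \tau - \bar\zeta \rho_1)$. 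The Weyl-chamber condition $\blf{\lambda}{W(t_i,t_{i+1})} > 0$ has already been translated at \eqref{eq:WChcond1}--\eqref{eq:WChcond2} to the inequality $nl > -k t_i^2$. Collecting these pieces converts \eqref{eq:boprod_z} into the product expansion stated in the theorem, and exhibits $\Xi(\tau; j_n)$ as a function of $\tau$ alone, independent of the choice of $\ell, \ell'$ used to realize $\Hp \simeq \coneU$.

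It remains to justify absolute convergence on $\abs{\delta}\Im\tau > 2n$. I would estimate $\abs{e(l\tau - k\bar\zeta)} = \exp(-2\pi l \Im\tau - \pi k \abs{\delta})$ on this region. Pairs $(l,k)$ with $kl$ in the support of the non-positive-index Fourier coefficients of $j_n$ give only finitely many exceptional terms, while the remaining terms satisfy $kl > 0$. Under the Weyl-chamber condition $nl > -kt_i^2$ together with $\abs{\delta}\Im\tau > 2n$, these contribute a tail that is absolutely bounded by a pair of geometric series in $l$ and $k$; combined with the polynomial bound on the Fourier coefficients $c(kl)$ of $j_n$, this gives absolute convergence off the set of poles.

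The step that requires the most care is the consistent tracking of orientations and sign conventions across the three identifications $\Hp \hookrightarrow \HO \simeq \coneO$, $\Hp \simeq \coneU$, and $\posQuad \simeq \mathcal{C}_K$, together with the matching of the $K$-indexed Weyl-chamber conditions from section \ref{sec:weylchambers_K} with the pairing-based condition of \eqref{eq:boprod_z}; once these are fixed, the proof is a bookkeeping exercise assembling the material of the preceding sections. No genuinely new analytic input is needed beyond the Whittaker integral evaluated in section \ref{sec:PhimK} and the constant-function lift $\Psi_L(Z;1) = \eta(z_1)\eta(z_2)$ from section \ref{sec:correction}.
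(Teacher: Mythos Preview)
Your proposal is correct and matches the paper's proof: both deduce (1)--(2) from \cite{Ho12}, Theorem~8.1 (the paper additionally cites \cite{Bo99Cor} for the finite-order multiplier), and obtain (3) by translating each term of \eqref{eq:boprod_z} through \eqref{eq:mye1234}, \eqref{eq:WChcond1}--\eqref{eq:WChcond2}, and \eqref{eq:rho_ofjm}. The only deviation is the convergence argument: rather than estimating the product directly as you suggest, the paper simply invokes the general bound $\hlf{z}{z} > 4\abs{\hlf{\ell'}{\ell}}n$ from \cite{Ho12}/\cite{Br02} and computes $\hlf{z}{z} = 2\Im\tau\,\abs{\delta}\,\abs{\hlf{\ell'}{\ell}}$ to read off $\abs{\delta}\Im\tau > 2n$.
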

\begin{proof}
  Most of the claimed statements now follow by the results of \cite{Ho12}.
\begin{enumerate}
\item In particular, it follows from \cite{Ho12} theorem 8.1, that $\Xi(j_n)$
 exists and is automorphic of weight $0$ for $\Ug(L)$. 
That its multiplier system has at most finite order is implied by Borcherds' result \cite{Bo99Cor},
which can be proved through an embedding trick. 
\item This also follows directly from theorem 8.1 in \cite{Ho12}. In the present setting,
the Heegner divisor of index $m=-n$ is given by a locally finite sum of orbits of Heegner-points
$\tau_\lambda$, see \eqref{eq:defHm} in section \ref{sec:heegnerU} above. 
 \item Consider the Borcherds product expansion from \eqref{eq:boprod_z}. For
$\rho \vcentcolon = \rho(f; W) = (\rho_1, \rho_2)$ and $\lambda = (k,l) \in K$
we have
\[
\frac{\hlf{z}{\rho}}{\hlf{\ell'}{\ell}} 
=  -\bar\zeta \rho_1 + \tau\rho_2, \quad
\frac{\hlf{z}{\lambda}}{\hlf{\ell'}{\ell}}
= l \tau  - k \bar\zeta, 
\]
so the arguments of the exponential function can be written in the claimed form.
 We have already rewritten the Weyl-chamber condition in the more explicit form
\eqref{eq:WChcond1} and, further, for $W = W(t_i, t_{i+1})$ in the form
\eqref{eq:WChcond2}, needed here. So, with the components of the Weyl-vector
 \eqref{eq:rho_ofjm} previously calculated, we get the claimed form of the Borcherds product expansion.
 It remains only to examine the issue of convergence: 
According to \cite{Ho12}, theorem 8.1, the Borcherds product \eqref{eq:boprod_z} converges absolutely
for $z$ in the complement of the set of poles and satisfying $\hlf{z}{z} > 4
\abs{\hlf{\ell'}{\ell}} \abs{m_0}$, where $m_0 = \min\{ m \in \Z;\, c(m)\neq
0\}$, i.e.\ $\abs{m_0} = n$ for $\Xi(j_n)$. This precise
condition, originally for the Borcherds lift to $\Og(2,p)$, is due to Bruinier, see
\cite{Br02} theorem 3.22.  
Hence, since for $\tau \in \Hp$, we have $\hlf{z}{z} = 2\Im\tau\abs{\delta}\abs{\hlf{\ell'}{\ell}}$, 
the product converges absolutely if $\tau$ is not a pole of $\Xi(\tau; j_n, W)$ and satisfies
$\frac12 \Im\tau\abs{\delta} > n$.
\end{enumerate}
\end{proof}
 Since the lifting is multiplicative, see \cite{Ho12} theorem 8.1, by writing a weakly holomorphic modular form 
$f\in \Mweak_0(\Gamma)$ as a linear combination of $j_n$ plus its constant term, the following corollary is immediate.
\begin{corollary}
Let $f \in \Mweak_0(\Gamma)$ be a weakly holomorphic modular form, with Fourier expansion $f = \sum_{m\gg -\infty} c(m)q^m$.
Then there is a meromorphic modular form of weight $c(0)/2$, the Borcherds lift
$\Xi(f)$, transforming with a multiplier system of at most finite order under
$\Ug(L)$, with the following properties
\begin{enumerate}
 \item The zeros and poles of $\Xi(f)$ lie along a Heegner divisor, given by 
\[
\Div\bigl(\Xi(f)\bigr) = \frac{1}{2} \sum_{\substack{m <0 \\ c(m)\neq 0}} c(m) \HeegU(m). 
\]  
\item For each Weyl chamber $W$, $\Xi(f)$  has an infinite product expansion of the
form
\[
\Xi\bigl(\tau; f, W\bigr) 
= C e\left( \rho_2\tau - \bar\zeta\rho_1 \right) 
\prod_{\substack{ k,l \in \Z \\ 2l \Im\tau + k \abs{\delta} > 0\\
\forall \tau \in W}}
 \left( 1 - e\bigl( l \tau - k \bar\zeta \bigr)\right)^{c(kl)},
\]
where $C$ is a constant of absolute value one, and $(\rho_1, \rho_2)$ is the Weyl-vector attached 
to $f$ and $W$, given explicitly in remark \ref{rmk:for_flcm}.
The product converges absolutely off the set of poles for $\Im\tau\abs{\delta} > 2\abs{m_0}$, with
$m_0 = \min\{m_0\in\Z\,;\; c(m) \neq 0\}$.
\end{enumerate}
\end{corollary}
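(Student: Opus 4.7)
The plan is to invoke the multiplicativity of the Borcherds lift, Theorem~8.1 of \cite{Ho12}, to reduce the corollary to Theorem~\ref{thm:mainU11} applied termwise, together with the computation of $\Xi(1)$ in section~\ref{sec:correction}. First I would write
\[
f = c(0)\cdot 1 + \sum_{\substack{n > 0 \\ c(-n) \neq 0}} c(-n)\, j_n,
\]
which is a finite sum since the principal part of $f$ has only finitely many nonzero terms, and is valid because $\{1\} \cup \{j_n\}_{n \geq 1}$ spans $\Mweak_0(\Gamma)$. Multiplicativity then forces
\[
\Xi(f) \vcentcolon = \Xi(1)^{c(0)} \cdot \prod_{\substack{n > 0 \\ c(-n)\neq 0}} \Xi(j_n)^{c(-n)},
\]
and it remains to read off each asserted property from this factorization.

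For part~(i), the weight comes out as $c(0)/2$ because $\Xi(1) = \eta(\tau)\eta(-\bar\zeta)$ has weight $\tfrac12$ in $\tau$ (section~\ref{sec:correction}) while each $\Xi(j_n)$ has weight $0$ by Theorem~\ref{thm:mainU11}(i); the automorphy under $\Ug(L)$ and the finite-order multiplier system transfer as a finite product of factors already enjoying these properties. For the divisor, I would observe that $-\bar\zeta$ lies in $\Hp$ in both cases of the definition of $\zeta$, so $\Xi(1)$ is holomorphic and non-vanishing on $\Hp$ by the standard $\eta$-product. Hence the divisor is contributed purely by the $\Xi(j_n)^{c(-n)}$, yielding the asserted $\frac12 \sum_{m<0} c(m)\HeegU(m)$ via Theorem~\ref{thm:mainU11}(ii).

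For the product expansion in part~(ii), I would fix a Weyl-chamber $W$ for $f$. By Remark~\ref{rmk:for_flcm}, $W = \bigcap_m W_m$ is a finite intersection of Weyl-chambers $W_m$ for the corresponding $j_{\abs{m}}$, and the Weyl-vector of $f$ on $W$ is the corresponding sum of Weyl-vector contributions. Substituting the expansions from Theorem~\ref{thm:mainU11}(iii) for the $\Xi(j_n)^{c(-n)}$ and the $\eta$-product for $\Xi(1)^{c(0)}$, the individual constants absorb into a single $C$ of absolute value one, the Weyl-vector parts combine additively into the $(\rho_1, \rho_2)$ prescribed by Remark~\ref{rmk:for_flcm}, and the exponent of each factor $(1 - e(l\tau - k\bar\zeta))$ collapses by linearity of the Fourier expansion to $c(kl)$. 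The union of the individual Weyl-chamber conditions $n l > - k t_i^2$ aggregates exactly into the uniform condition $2 l \Im\tau + k\abs{\delta} > 0$ for all $\tau \in W$ that appears in the statement (cf.\ \eqref{eq:WChcond1}). Finally, the absolute-convergence region $\abs{\delta}\Im\tau > 2\abs{m_0}$ is inherited directly from the single factor with largest $\abs{m} = \abs{m_0}$, since a finite product of absolutely convergent infinite products is absolutely convergent on the intersection of their convergence domains.

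The main obstacle is purely bookkeeping: checking the exponent identification
\[
c(kl) \;=\; c(0)\cdot [kl = 0] \;+\; \sum_{\substack{n>0 \\ c(-n)\neq 0}} c(-n)\, c_{j_n}(kl)
\]
for every $(l,k) \in K$, where the $kl = 0$ contribution comes solely from $\Xi(1)^{c(0)}$ and the $kl < 0$ contribution isolates a single $j_{\abs{kl}}$. There is no new analytic content beyond what Theorem~\ref{thm:mainU11} and the computation of $\Xi(1)$ already supply.
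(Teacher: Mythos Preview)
Your proposal is correct and follows essentially the same route as the paper: write $f$ as a linear combination of the $j_n$ plus the constant $c(0)$, and deduce everything from Theorem~\ref{thm:mainU11} and the computation of $\Xi(1)$ via multiplicativity of the lift (Theorem~8.1 of \cite{Ho12}). The only cosmetic difference is that the paper reads the weight $c(0)/2$ directly off the general statement in \cite{Ho12}, whereas you extract it from the factor $\Xi(1)^{c(0)}$; your more explicit bookkeeping on the exponents and Weyl-chamber conditions simply unpacks what the paper leaves implicit.
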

\begin{proof}
 With $f$ written as a linear combination $f = \sum_{\substack{m \in \Z \\ m< 0}} c(m) j_{\abs{m}} + c(0)$, the statements concerning the product expansion and the divisor of $\Xi(f)$ follow from the theorem and its proof by multiplicativity.
Also, by the general result  \cite{Ho12}, theorem 8.1, the Borcherds lift $\Xi(f)$ transforms with weight $c(0)/2$.  
\end{proof}

\section*{Acknowledegements} 
The results in the present paper are mostly generalizations of results from the last chapter of the author's thesis \cite{Ho11}, completed in 2011 at the TU Darmstadt under the supervision of J.\ Bruinier.

\bibliographystyle{plain}
\bibliography{u11_liftv2.bib}

\begin{thebibliography}{1}

\bibitem{Bo98}
Richard~E. Borcherds.
\newblock Automorphic forms with singularities on {G}rassmannians.
\newblock {\em Invent. Math.}, 132(3):491--562, 1998.

\bibitem{Bo99Cor}
Richard~E. Borcherds.
\newblock Correction to: ``{T}he {G}ross-{K}ohnen-{Z}agier theorem in higher
  dimensions'' [{D}uke {M}ath. {J}. {\bf 97} (1999), no. 2, 219--233;
  {MR}1682249 (2000f:11052)].
\newblock {\em Duke Math. J.}, 105(1):183--184, 2000.

\bibitem{Br02}
Jan~H. Bruinier.
\newblock {\em Borcherds products on {O}(2, {$l$}) and {C}hern classes of
  {H}eegner divisors}, volume 1780 of {\em Lecture Notes in Mathematics}.
\newblock Springer-Verlag, Berlin, 2002.

\bibitem{BY}
Jan~Hendrik Bruinier and Tonghai Yang.
\newblock Twisted {B}orcherds products on {H}ilbert modular surfaces and their
  {CM} values.
\newblock {\em Amer. J. Math.}, 129(3):807--841, 2007.

\bibitem{erdelyi2}
A.~Erd{\'e}lyi, W.~Magnus, F.~Oberhettinger, and F.~G. Tricomi.
\newblock {\em Tables of integral transforms. {V}ol. {I}}.
\newblock McGraw-Hill Book Company, Inc., New York-Toronto-London, 1954.
\newblock Based, in part, on notes left by Harry Bateman.

\bibitem{Ho11}
Eric Hofmann.
\newblock {\em Automorphic Products on Unitary Groups}.
\newblock PhD thesis, TU Darmstadt, 2011.
\newblock http://tuprints.ulb.tu-darmstadt.de/2540/.

\bibitem{Ho12}
Eric Hofmann.
\newblock Borcherds products on unitary groups.
\newblock preprint, arXiv:1210.2542, 2012.

\bibitem{LangEF}
Serge Lang.
\newblock {\em Elliptic functions}.
\newblock Addison-Wesley Publishing Co., Inc., Reading, Mass.-London-Amsterdam,
  1973.
\newblock With an appendix by J. Tate.

\bibitem{AbSt}
Irene~A. Stegun, editor.
\newblock {\em Pocketbook of mathematical functions}.
\newblock Verlag Harri Deutsch, Thun, 1984.
\newblock Abridged edition of {\textit{ Handbook of mathematical functions}}
  edited by Milton Abramowitz and Irene A. Stegun, Material selected by Michael
  Daos and Johann Rafelski.

\end{thebibliography}

\end{document}